\documentclass[a4paper,english]{amsart}
\usepackage[T1]{fontenc}
\usepackage[latin9]{inputenc}
\setlength{\parskip}{\medskipamount}
\setlength{\parindent}{0pt}
\usepackage{amsthm}
\usepackage{amstext}
\usepackage{setspace}
\usepackage{amssymb}
\usepackage{setspace}
\usepackage{float}
\usepackage{fullpage}
\usepackage{graphicx}
\usepackage{mathptmx}
\usepackage{psfrag}
\usepackage{epsfig}
\usepackage{color}
\usepackage{amscd}
\def\S{\mathbb{S}}
\onehalfspacing

\makeatletter
\numberwithin{equation}{section} 
\numberwithin{figure}{section} 
\theoremstyle{plain}
\newtheorem{thm}{Theorem}[section]
  \theoremstyle{plain}
  \newtheorem{lem}[thm]{Lemma}

  \theoremstyle{remark}
  \newtheorem{rem}[thm]{Remark}
  \theoremstyle{plain}
  
\newtheorem{prop}[thm]{Proposition}
\usepackage{babel}
\usepackage[unicode=true,
 bookmarks=true,
 bookmarksnumbered=false,
 bookmarksopen=false,
 breaklinks=false,
 pdfborder={0 0 1},
 backref=false,
 colorlinks=false]
 {hyperref}
\hypersetup{%
 pdfauthor={Marc Kesseboehmer, Bernd O. Stratmann} }

\usepackage{amsthm}
\usepackage[all]{xy}
\usepackage{psfrag}
\usepackage{bbm}

\DeclareMathOperator{\var}{var}

\newcommand{\Z}{\mathbb{Z}}

\newcommand{\R}{\mathbb {R}}

\newcommand{\D}{\mathbb {D}}

\newcommand{\N}{\mathbb {N}}

\renewcommand{\hat}{\widehat}
\def\bewend{\hspace{\stretch{1}}$\square$}
\def\var{\varepsilon}

\makeatother

\usepackage{babel}

\begin{document}
\selectlanguage{english}

\title{On Hausdorff dimension and cusp excursions for Fuchsian groups}




\author{Sara Munday}

\address{Mathematical Institute, University of St. Andrews, North Haugh, St.
Andrews KY16 9SS, Scotland}

\email{sam20@st-and.ac.uk}




\maketitle
\begin{abstract}
Certain subsets of limit sets of geometrically finite Fuchsian groups with parabolic elements are considered. It is known that Jarn\'{\i}k limit sets determine a "weak multifractal spectrum" of the Patterson measure in this situation. This paper will describe a natural generalisation of these sets, called strict Jarn\'{\i}k limit sets, and show how these give rise to another weak multifractal spectrum. Number-theoretical interpretations of these results in terms of continued fractions will also be given.
\end{abstract}

\section{Introduction and statement of results}

Let us begin by recalling some of the basic notions from hyperbolic geometry which will be used throughout the paper (for more details, the reader is referred to \cite{ab}). We will make use of both the Poincar\'{e} Disc model $(\D^2, d_h)$ and the upper half-plane model $(\mathbb{H}, d_{\mathbb{H}})$ of two-dimensional hyperbolic space and we will use $d$ to denote either of these metrics whenever the context does not depend on the particular model. The group of isometries of hyperbolic space is denoted by $Con(1)$. Recall that in the upper half-plane model, this group is isomorphic to the group $PSL_2(\R)$. It is well known that the elements of $Con(1)$ can be classified as parabolic, hyperbolic or elliptic according to their fixed points or, equivalently, geometric action. A parabolic element is one that has exactly one fixed point which lies on the boundary of hyperbolic space. Each parabolic element leaves invariant a family of \textit{horoballs}. In $\D^2$, these are Euclidean circles internally tangent to $\S^1$; the point of tangency is the fixed point of the parabolic map. In $\mathbb{H}$, these are either Euclidean circles tangent to $\R$ or, if the fixed point of the parabolic map is the point at infinity, Euclidean horizontal straight lines.

We shall consider discrete subgroups of the group $Con(1)$, that is, subgroups that act properly discontinuously on the interior of hyperbolic space. These groups are known as \textit{Fuchsian groups}. We are interested in subsets of the \textit{limit set} of a Fuchsian group. The limit set $L(G)$ is the set of accumulation points of the orbit of any point under the group $G$. It is necessarily contained in the boundary of hyperbolic space.
It is well known that the limit set of a Fuchsian group is either empty, consists of one or two points, or else contains uncountably many points. If $L(G)$ is uncountable, we say that $G$ is \textit{non-elementary}. A group $G$ is referred to as \textit{geometrically finite} if it has a fundamental domain with finitely many sides. In two-dimensions, a Fuchsian group is geometrically finite if and only if it is finitely generated. (Note that this is no longer true in higher dimensions.) Throughout this paper, we shall only be concerned with non-elementary, geometrically finite Fuchsian groups with one parabolic element. Let us note here that the restriction to one parabolic element is not  essential; this is addressed in Remark \ref{onepara} below.

So, suppose that $G$ is a non-elementary, geometrically finite Fuchsian groups with one parabolic element, say $\gamma$, and suppose that $p$ is the fixed point of this element $\gamma$.
We now describe a certain set of horoballs associated to the orbit of the parabolic fixed point $p$ under the group $G$, called a \textit{standard set} of horoballs. This was first introduced, in a more general situation,  by Stratmann and Velani  \cite{SV}. Let $H_{\gamma}$ be a horoball tangential to the point $p$, and let $H_{g}$ be the image of $H_{\gamma}$ under the map $g\in G$. Note that if the map $g$ belongs to the stabiliser $G_p$ of $p$, which is the set $G_p:=\{g\in G:g(p)=p\}\cong\Z$, then the horoball $H_{g}$ is equal to $H_{\gamma}$. The horoball $H_h$ is independent of the choice of $h\in[g]:=G/G_p$, so $H_g$ is well-defined for $g\in G/G_p$. It is well known that the set $\{H_{g}:g\in G/G_p\}$ can be chosen in such a way that it is a pairwise disjoint collection of horoballs. This set is a standard set of horoballs for $G$. 

Let $s_\xi$ denote the hyperbolic half-ray between the origin and the point $\xi$ on $\S^1$. We will think of this ray as having an orientation, so that we travel from 0 towards $\S^1$. Define the \textit{top} of the standard horoball $H_{g}$ to be the first point on the boundary of $H_g$ reached whilst traveling along $s_{g(p)}$, that is,
\[
\tau_g:=s_{g(p)}\cap \partial H_{g}\cap \D^2.
\]
It was shown in \cite{SV} that the point $\tau_g$ lies a bounded distance away from the orbit of the origin under $G$. Examining the proof of this fact given there, they show that it is possible to choose  a set $\mathfrak{T}$ of coset representatives of $G/G_p$ in a geometric way, namely,  $g$ is in $\mathfrak{T}$ if the orbit point $g(0)$ lies in a $\rho$-neighbourhood of $\tau_g$, the top of the horoball $H_{g}$, where $\rho$ is the bound on the distance between the tops of standard horoballs and the orbit of 0. That is
\[
g\in\mathfrak{T}\Rightarrow d_h(\tau_g, g(0))\leq\rho.
\]
This is referred to  as the \textit{top representation} and from here on we will write $\{H_{g}:g\in\mathfrak{T}\}$ for a fixed standard set of horoballs for $G$ with top representation. Note that the choice for $g$ is not necessarily unique, but that this does not matter.

Before stating  our fist main theorem, we make the following definitions:
\begin{itemize}
  \item Let $\mathcal{L}(G)$ denote the set of all those $\xi\in L(G)$ with the property that the ray $s_\xi$ intersects infinitely many standard horoballs with top representation $H_{g_1}(\xi), H_{g_2}(\xi), H_{g_3}(\xi), \ldots$, which we always assume to be ordered according to their appearance when traveling from 0 to $\xi$.
  \item We call the distance traveled by a ray $s_\xi$ inside a standard horoball a \index{cusp excursion}\textit{cusp excursion}.  For each $\xi\in \mathcal{L}(G)$, let $d_n(\xi)$ denote the depth of the $n$-th cusp excursion, that is,
\[
d_n(\xi):=\max\{d(\eta,\partial H_{g_{n}}(\xi)):\eta\in s_\xi\cap \mathrm{Int} (H_{g_{n}}(\xi))\}.
\]
  \item For $\kappa>0$, let  $\mathcal{B}_\kappa$ denote the set of all those $\xi\in \mathcal{L}(G)$ with the property that the distance traveled between each cusp excursion is bounded by $\kappa$. In other words, $\mathcal{B}_\kappa$ is defined  to be
\[
{\mathcal  B}_{\kappa}(G): =  \{ \xi \in \mathcal{L}(G):  d(H_{g_{n}}(\xi),H_{g_{n+1}}(\xi) )< \kappa, \, \hbox{ for all } \,
n \in \N\}.
\]
  \item Now, for $\kappa, \tau>0$,  define the \textit{$(\tau, \kappa)$-Good set} by
\[
{\mathcal G}_{\tau,\kappa}(G) := \{\xi \in {\mathcal  B}_\kappa(G): d_{n}(\xi) > \log\tau, \, \hbox{ for all } \, n \in \N\}.
\]

\end{itemize}

We then have the following theorem.

{\bf Theorem 1.} For all $\kappa>0$, we have that
\[
\lim_{\tau \to \infty} \dim_{H}\left({\mathcal G}_{\tau, \kappa}(G)\right) =
\frac12.\]

\begin{rem}
If the group $G$ is chosen to be $PSL_2(\Z)$, this result provides the corollary that if we define the set $F_N$ to be the set of all those numbers in $[0,1]$ with continued fraction representation containing only entries at least as large as $N$, that is, if $F_N:=\{x=[a_1(x), a_2(x), \ldots]:a_n(x)\geq N\text{ for all } n\in\N\}$, we have that
\begin{eqnarray}\label{origgood}
\lim_{N\to\infty}\dim_H(F_N)=\frac12.
\end{eqnarray}
Note that $\kappa$ does not play a role here, as the standard set of horoballs for the group $PSL_2(\Z)$ can be chosen to be the \textit{Ford circles} (the horoball at $\infty$ is the horizontal Euclidean straight line through the point $i$ and the images of this line are Euclidean circles with base point $p/q\in\mathbb{Q}$, in reduced form,  and radius $1/2q^2$) and there exists a global constant bound on the hyperbolic distance between neighbouring Ford circles.
The result  (\ref{origgood}) can also be obtained from Theorem 2 in the 1941 paper of I.J. Good \cite{Good} (hence the name for the $(\tau, \kappa)$-Good set), where he gives upper and lower bounds for the Hausdorff dimension of each set $F_N$. For the details of the beautiful connection between the  modular group $PSL_2(\Z)$ and the continued fraction expansion, the reader is referred to  Series \cite{series}. We also mention that Jaerisch and Kesseb\"ohmer \cite{JKarith} have obtained the precise asymptotic for $\dim_H(F_N)$. In addition to this, the recent paper of Jordan and Rams \cite{JorRams} contains results concerning Good-type sets.
\end{rem}



We are now ready to state our next  main result. First, define $t_n(\xi):=d_h(0, z_{g_n})+d_n(\xi)$, where $z_{g_n}$ is the point the ray from 0 to $\xi$ enters the $n$-th horoball (that is, the point just before the $n$-th cusp excursion begins). Then, for $\kappa>0$ and $\theta\in[0,1]$, define the \textit{ strict $(\theta, \kappa)$-Jarn\'{\i}k  set} $\mathcal{J}_{\theta, \kappa}^*(G)$ by setting
\[\mathcal{J}_{\theta, \kappa}^*(G):=\left\{\xi\in\mathcal{B}_\kappa:\lim_{n\to\infty}d_n(\xi)=\infty\text{ and }\limsup_{n\to\infty}\frac{d_n(\xi)}{t_n(\xi)}=\theta\right\}.
\]

{\bf Theorem 2.}
For each $\kappa>0$, we have that
\[
\dim_H(\mathcal{J}_{\theta,\kappa}^*(G))=\frac12(1-\theta).
\]

\begin{rem}\label{onepara}
Although Theorems 1 and 2 are stated in terms of a Fuchsian group with one parabolic element, in fact this restriction is not really necessary. As can been seen in \cite{SV}, a standard set of horoballs with top representation can be chosen for any geometrically finite Fuchsian group with at least one parabolic point, that is, with any finite number of non-equivalent parabolic points. Since the proofs of our main theorems are based on defining covers of each $(\tau, \kappa)$-Good set and strict  $(\theta, \kappa)$-Jarn\'{\i}k set using these horoballs and increasing the number of balls with the same approximate size by a fixed finite number does not change any of our estimates, it still holds that analogous sets defined for cusp excursions into finitely many cusps have the same dimension. However, we choose to write the proofs for the case of one parabolic element for the sake of clarity.
\end{rem}

Our final main result is the following.
Let the $\beta$-strict-Jarn\'{\i}k level sets for the Patterson measure $\mu$ be defined by
\[ {\mathcal F}_{\beta}^* := \left\{\xi \in  L(G) :
  \limsup_{n \to \infty} \frac{\log \mu (b(\xi, e^{-t_n(\xi)}))}{-t_n(\xi)} = \beta \right\}.\]
We obtain the following theorem.

{\bf Theorem 3.}
\textit{
     For each $\beta \in [2\delta-1, \delta]$, we have that}
\[ \dim_{H} \left(
{\mathcal F}_{\beta}^* \cap  {\mathcal B}(G)
    \right) =  \frac12\cdot f_p(\beta),\]
    where $f_p(\beta):=(\beta-(2\delta-1))/(1-\delta).$

\begin{rem}
Theorems 2 and 3 are analogues of the results obtained by Stratmann in \cite{SJarnik1} and \cite{SJarnik2}. At the end of Section 4 we will explain this remark more fully. His results are for so-called Jarn\'{\i}k limit sets, which are defined similarly to our strict Jarn\'{\i}k sets but without the restriction to points in $\mathcal{B}_\kappa$. This is the reason for the word `strict' appearing in our notation.
\end{rem}

We assume that the reader is familiar with the construction and basic properties of Hausdorff dimension. For more details, a useful reference is \cite{Fal}. One particularly useful property that we will employ without further mention is that if $E\subseteq F\subseteq \R^d$, then $\dim_H(E)\leq \dim_H(F)$.
In the proofs of the above main theorems, we will also make use of the following well-known lemma, due to Frostman \cite{frostman}.
\begin{lem}\label{frost}
{\em (Frostman's Lemma.)} Let $F$ be a bounded subset of $\R^n$. Let $\mu$ be a finite Borel measure supported  on $F$, let $|U|$ denote the diameter of the set $U$ and suppose that for some $s>0$ there exist constants $c>0$ and $\delta>0$ with the property that
\[
\mu(U) \leq c|U|^s
\]
for all sets $U$ with $|U|\leq\delta$. Then $\mathcal{H}^s(F)\geq\frac{\mu(F)}{c}$ and so
\[
s\leq \dim_H(F).
\]
\end{lem}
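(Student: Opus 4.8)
The plan is to prove this by the \emph{mass distribution principle}, working directly from the definition of Hausdorff measure as the increasing limit of the pre-measures $\mathcal{H}^s_\delta$. We may assume $\mu(F)>0$, since otherwise the estimate $\mathcal{H}^s(F)\ge\mu(F)/c$ is trivial and the dimension conclusion is vacuous. Fix $\delta>0$ as in the hypothesis, and let $\{U_i\}_{i\in\N}$ be an arbitrary countable cover of $F$ by sets with $|U_i|\le\delta$ for every $i$; such covers exist because $F$ is bounded (e.g.\ cover $F$ by finitely many cubes of small side length). Using countable subadditivity of the Borel measure $\mu$ together with the hypothesis $\mu(U_i)\le c|U_i|^s$, we obtain
\[
\mu(F)\;=\;\mu\Big(F\cap\bigcup_i U_i\Big)\;\le\;\sum_i \mu(U_i)\;\le\;c\sum_i |U_i|^s .
\]
Hence $\sum_i |U_i|^s\ge \mu(F)/c$ for \emph{every} $\delta$-cover of $F$, and taking the infimum over all such covers gives $\mathcal{H}^s_\delta(F)\ge \mu(F)/c$.

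Next I would let $\delta\to 0$. Since $\mathcal{H}^s_\delta(F)$ increases as $\delta$ decreases and $\mathcal{H}^s(F)=\lim_{\delta\to 0}\mathcal{H}^s_\delta(F)=\sup_{\delta>0}\mathcal{H}^s_\delta(F)$, the bound passes to the limit, yielding
\[
\mathcal{H}^s(F)\;\ge\;\frac{\mu(F)}{c}\;>\;0 .
\]
This is the first assertion of the lemma.

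Finally, to get $s\le\dim_H(F)$, I would invoke the elementary fact that $\mathcal{H}^s(F)>0$ forces $\dim_H(F)\ge s$: by definition $\dim_H(F)=\inf\{t\ge 0:\mathcal{H}^t(F)=0\}$, and if we had $\dim_H(F)<s$ then $\mathcal{H}^s(F)=0$, contradicting the positivity just established. (Equivalently, $\mathcal{H}^s(F)>0$ implies $\mathcal{H}^t(F)=\infty$ for all $t<s$, so the infimum defining $\dim_H(F)$ is at least $s$.) There is no genuine obstacle in this argument: it is a soft, purely measure-theoretic computation. The only points deserving a moment's care are the existence of arbitrarily fine covers (guaranteed by boundedness of $F$) and the fact that $\mathcal{H}^s$ is the monotone limit of the $\mathcal{H}^s_\delta$, both of which are standard features of the construction of Hausdorff measure.
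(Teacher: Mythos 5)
Your proof is correct, and it is the standard mass-distribution-principle argument; note that the paper itself offers no proof of this lemma, simply quoting it as a classical fact with a citation to Frostman, so there is nothing to compare against beyond the statement. One small point worth a sentence in a careful write-up: the covering sets $U_i$ need not be Borel, so to apply $\mu(U_i)\leq c|U_i|^s$ and countable subadditivity literally one should either read $\mu$ as the associated outer measure or replace each $U_i$ by its closure (or a Borel hull), which does not change the diameter and hence does not affect the estimate. Also, arbitrarily fine covers of $F$ exist irrespective of boundedness, so that hypothesis is not really needed for your argument; with these trivial touch-ups the proof stands as written.
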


\section{Preliminaries}

Let us first recall the  definition of the \textit{shadow map} $\Pi:\mathcal{P}(\D^2)\to \mathcal{P}(\S^1)$, which  is given by \[\Pi(A):=\{\xi\in\S^1:s_\xi\cap A\neq\emptyset\}. \]
 Also, if we have two functions $f,g:\R\to\R$ such that there exists a constant $c>1$ such that the inequality $c^{-1}g(x)\leq f( x) \leq cg(x)$ holds uniformly for $x\in\R$, then the functions $f$ and $g$ are said to be \index{comparability}\textit{comparable}, which we denote by $f\asymp g$. For future reference, if we wish to refer only to the right-hand side of the above inequality, we write $f\ll g$. 
We obtain the following estimate for the size of the shadow of a standard horoball using basic hyperbolic geometry: 
\begin{eqnarray}\label{sizeball}
|\Pi(H_{g})|\asymp e^{-d(0, \tau_g)}.
\end{eqnarray}

\begin{rem}\label{shrem1}
Notice that combining the fact that the top of each standard horoball $H_g$ is within a constant distance of the orbit point $g(0)$  with the above estimate also yields that
\[
|\Pi(H_{g})|\asymp e^{-d(0, g(0))}.
\]
\end{rem}

Let $F_G$ be a fundamental region for $G$ with the property that one vertex of $F_G$ is equal to $p$. In the tessellation of hyperbolic space given by the region $F_G$, each map in the stabiliser of $p$ sends $F_G$ to a region that also has one vertex equal to $p$. 
We will refer to the countably many copies of $F_G$ that  each have one vertex at a given point in $G(p)/G_p$ as \index{petals}\textit{petals}. In somewhat of an abuse of notation, we will use this word interchangeably to mean the arc of $\S^1$ enclosed by the two edges of the petal.


Let us now recall the definition of the cross-ratio. For us (by which we mean that this is sometimes found differently in the literature), the \textit{cross-ratio} of four points $x, y, z, t$ in $\mathbb{H}\cup\R\cup\{\infty\}$ is given by
\[
[x, y, z, t]:=\frac{(x-y)(z-t)}{(y-z)(t-x)}.
\]

Now consider the following situation. 
Let $x$ and $y$ be two distinct points in the upper-half plane. Suppose that either $\mathrm{Re}(x)<\mathrm{Re}(y)$, or,
 if  $\ \mathrm{Re}(x)=\mathrm{Re}(y)$, suppose that $\mathrm{Im}(x)<\mathrm{Im}(y)$. Let $\xi$ and $\eta$ denote the start and end points of the oriented geodesic that joins $x$ to $y$ (see Figure 2.1).

\begin{figure}[htbp]
\begin{center}
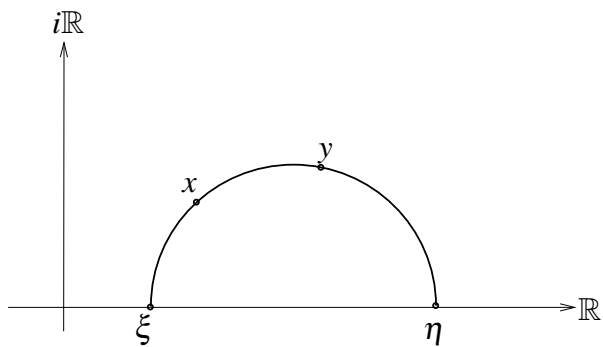\caption{The oriented geodesic through $x$ and $y$ with startpoint $\xi$ and endpoint $\eta$.}
\end{center}
\end{figure}

In this situation we have a a well known and extremely useful hyperbolic distance formula,  which is given in the following proposition.
\begin{prop}\label{crf}
Let $x, y, \xi$ and $\eta$ be as described above. Then
\[
d_{\mathbb{H}}(x,y)=\log([y, \xi, x, \eta]).
\]
\end{prop}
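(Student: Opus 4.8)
The plan is to use two invariance properties: the cross-ratio $[\,\cdot,\cdot,\cdot,\cdot\,]$ is preserved by the action of $PSL_2(\mathbb{R})$ (indeed by every M\"obius transformation) on $\mathbb{H}\cup\mathbb{R}\cup\{\infty\}$, and the metric $d_{\mathbb{H}}$ is preserved by $PSL_2(\mathbb{R})$. Consequently it suffices to verify the identity after transporting the configuration $(x,y,\xi,\eta)$ by one well-chosen element of $PSL_2(\mathbb{R})$ to a normal form in which both sides can be evaluated directly.

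First I would recall that $PSL_2(\mathbb{R})$ acts transitively on ordered pairs of distinct points of the boundary $\mathbb{R}\cup\{\infty\}$ (the stabiliser of $(0,\infty)$ is the one-parameter group of dilations $z\mapsto\lambda^2 z$, $\lambda>0$); hence there is $T\in PSL_2(\mathbb{R})$ with $T(\xi)=0$ and $T(\eta)=\infty$. Being an isometry of $\mathbb{H}$ that fixes the boundary setwise, $T$ maps the geodesic from $\xi$ to $\eta$ onto the geodesic from $0$ to $\infty$, i.e.\ the positive imaginary axis, and it carries the orientation ``from $\xi$ through $x$ through $y$ to $\eta$'' onto the orientation ``from $0$ towards $\infty$'', i.e.\ the direction of increasing imaginary part. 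Therefore $T(x)=ia$ and $T(y)=ib$ for some reals $0<a<b$; the inequality $a<b$ (rather than $a>b$) is exactly the assertion that $T$ respects the chosen orientation, which is where the hypotheses on $\mathrm{Re}(x),\mathrm{Im}(x),\mathrm{Re}(y),\mathrm{Im}(y)$ enter, serving only to fix which boundary endpoint is $\xi$ and which is $\eta$.

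It then remains to compute both sides in this normal form. Parametrising the imaginary axis by $t\mapsto it$ and integrating the hyperbolic line element $\mathrm{d}t/t$ from $a$ to $b$ gives $d_{\mathbb{H}}(ia,ib)=\log(b/a)$. For the right-hand side, substituting into the defining formula and treating the argument $\infty$ as a limit,
\[
[\,ib,\ 0,\ ia,\ \infty\,]=\lim_{t\to\infty}\frac{(ib-0)(ia-t)}{(0-ia)(t-ib)}=\frac{ib}{-ia}\cdot\lim_{t\to\infty}\frac{ia-t}{t-ib}=\frac{ib}{-ia}\cdot(-1)=\frac{b}{a}.
\]
Combining this with the two invariance properties yields $d_{\mathbb{H}}(x,y)=d_{\mathbb{H}}(ia,ib)=\log(b/a)=\log\big([\,ib,0,ia,\infty\,]\big)=\log\big([\,y,\xi,x,\eta\,]\big)$, which is the claimed formula.

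The only point requiring care is the bookkeeping in the reduction step: one must be sure the normalising map $T$ can be chosen with $T(\xi)=0$ and $T(\eta)=\infty$ \emph{simultaneously} with $0<a<b$, and that the convention for evaluating a cross-ratio with one entry equal to $\infty$ is applied consistently. Beyond that, the computation is entirely routine; in particular no appeal to any explicit closed form for $d_{\mathbb{H}}$ in terms of Euclidean data is needed.
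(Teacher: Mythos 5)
Your proof is correct and follows essentially the same route as the paper: both normalise by a M\"obius transformation sending $\xi\mapsto 0$, $\eta\mapsto\infty$ (the paper writes this map down explicitly, you invoke transitivity on ordered boundary pairs), then use invariance of the cross-ratio and the elementary computations $d_{\mathbb{H}}(ia,ib)=\log(b/a)$ and $[ib,0,ia,\infty]=b/a$. No substantive difference.
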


\begin{proof}
 Let $g(z):=(az+b)/(cz+d)\in PSL_2(\R)$. Then, using the fact that $g'(z)=1/(cz+d)^2$, it can be shown that the cross-ratio is $g$-invariant. In other words, for all $g\in PSL_2(\R)$ and all distinct points $x, y, z, t \in \mathbb{H}\cup \R\cup \{\infty\}$, we have that $[g(x), g(y), g(z), g(t)]=[x,y,z,t]$. Now define the map $g\in PSL_2(\R)$ by setting
\[
g(z):=\frac{(\xi-\eta)^{-1}(z-\xi)}{(\xi-\eta)^{-1}(z-\eta)}.
\]
This map sends $\xi$ to zero and $\eta $ to $\infty$, therefore it maps the points $x$ and $y$ to two points on the imaginary axis, say $ia$ and $ib$, respectively.
We  then have that
\begin{eqnarray*}
d(x,y)&=&d(g(x),g(y))=\log (b/a)\\&=&\log\frac{0-ib}{0-ia}=\log([ib, 0, ia, \infty])\\&=&\log([g(y), g(\xi), g(x), g(\eta)])=\log([y, \xi, x, \eta]).
\end{eqnarray*}

\end{proof}

Our next aim is to obtain an estimate of the size of the petals around the point $g(p)$ for all $g\in G$. This will be achieved with the help of the cross-ratio formula for distances given in Proposition \ref{crf}. Let us first consider the petals around the point $p$ itself. First of all, without loss of generality, suppose that the top of the horoball $H_\gamma$ is actually at 0; this will not alter any of the estimates by any more than a constant amount, due to the definition of the top representation. Then, let $r$ denote the rotation around 0 that moves $p$ to 1. This rotates the entire horoball $H_\gamma$. Now send this rotated picture into the upper half-plane, by way of the inverse of the Cayley transformation which maps $1\mapsto \infty$ and $0\mapsto i$. This procedure sends the map $\gamma$ to a parabolic element of $PSL_2(\R)$ in standard form, that is, in the form $z\mapsto z+\beta$, for $\beta\neq0$. Again without loss of generality, suppose that $\beta=1$. We will now make a particular hyperbolic distance estimate, which is illustrated in Figure 2.2. The notation``$a\asymp_+b$'' means that there exists a constant $K>0$ such that $a-K\leq b\leq a+K$.

\begin{figure}[htpb]
\begin{center}
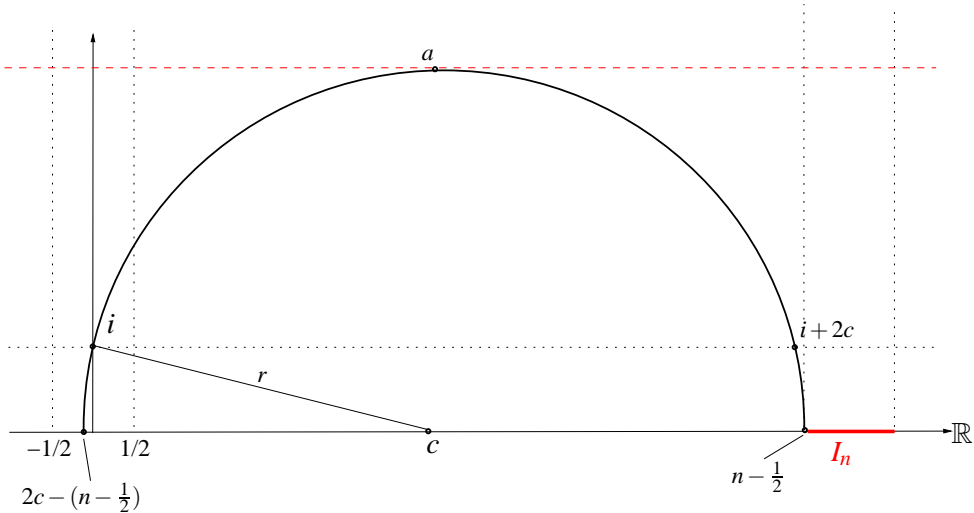\caption{Illustration of  the hyperbolic geodesic joining $i$ to $i+2c$.}
\end{center}
\end{figure}

\begin{lem}\label{forgoodjar}
As in Figure 2.2, with $c$ referring to the centre of the circle whose top half forms the geodesic in $\mathbb{H}$ joining $i$ to $n-1/2$, we have that
\[
d(i, i+2c)\asymp_+ 2\log n.
\]
\end{lem}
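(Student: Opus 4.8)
The plan is to use the cross-ratio distance formula from Proposition~\ref{crf} directly. The geodesic in $\mathbb{H}$ joining $i$ to $n-\tfrac12$ is the top half of a Euclidean semicircle; call its centre $c\in\R$ and radius $R$. The endpoints of this oriented geodesic on $\R$ are $c-R$ and $c+R$. From the two conditions that the semicircle passes through both $i$ and $n-\tfrac12$, namely $c^2+1=R^2$ and $(n-\tfrac12-c)^2=R^2$, one solves for $c$ and $R$ in terms of $n$; the key output is that $c\asymp n$ and $R\asymp n$ as $n\to\infty$ (more precisely $c=\tfrac{(n-1/2)^2-1}{2(n-1/2)}$, which is $n-\tfrac12$ up to an $O(1/n)$ term, and similarly for $R$).

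Next I would apply Proposition~\ref{crf} to the pair of points $x=i$ and $y=i+2c$. These two points have the same real part, and $\mathrm{Im}(i)<\mathrm{Im}(i+2c)$ since $c>0$ for large $n$, so the hypotheses of the proposition are met. The vertical line through $i$ and $i+2c$ is a geodesic only in the degenerate sense, so instead one should note that $i$ and $i+2c$ both lie on the Euclidean circle of radius $c$ centred at $i+c$... wait, more carefully: the oriented geodesic through $i$ and $i+2c$ is the vertical half-line $\{\mathrm{Re}=0\}$, with startpoint $\xi=0$ and endpoint $\eta=\infty$. Then Proposition~\ref{crf} gives
\[
d_{\mathbb{H}}(i,i+2c)=\log[\,i+2c,\,0,\,i,\,\infty\,]=\log\frac{(i+2c-0)(i-\infty)}{(0-i)(\infty-(i+2c))}=\log\frac{|i+2c|}{|i|}=\log\sqrt{1+4c^2},
\]
interpreting the ratio with $\infty$ in the standard limiting way. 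Since $c\asymp n$, this equals $\log(2c)+O(1/n^2)=\log n+O(1)$. But the claimed estimate is $2\log n$, so this computation shows I have mis-identified the relevant quantity — the factor $2\log n$ must come from comparing $d(i,i+2c)$ against the petal geometry differently, or $c$ itself must grow like $n^2$ rather than like $n$.

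Reconsidering: the geodesic joins $i$ to $n-\tfrac12$, and its centre $c$ on $\R$ is forced to be roughly halfway, so $c\asymp n$, and then $d(i,i+2c)\asymp \log(2c)\asymp \log n$ — this still gives $\log n$, not $2\log n$. The resolution, which I expect is the actual content, is that $c$ is \emph{not} the centre of the semicircle through $i$ and $n-\tfrac12$ in the naive sense; rather, reading Figure~2.2, the point labelled $2c-(n-\tfrac12)$ being the \emph{other} endpoint of a geodesic segment and $I_n$ being the petal interval suggests that the relevant semicircle passes through $n-\tfrac12$ and is tangent-like near $0$, forcing its radius to be of order $n$ but its centre computation to feed into a cross-ratio that picks up \emph{two} factors of $n$. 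Concretely: one endpoint of the geodesic through $i$ and $n-\tfrac12$ near the origin is at distance $\asymp 1/n$ from $0$ (since the circle through $i$ has $c-R = \frac{-1}{c+R}\asymp -1/n$), and the cross-ratio formula for $d(i,i+2c)$, when $i+2c$ is identified with the appropriate point and the correct start/endpoints $\xi\asymp 1/n$, $\eta\asymp n$ are used, yields $\log\big(\frac{n}{1/n}\big)=2\log n$. So the main step is to track these two endpoints carefully.

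Thus the proof outline is: (1) write down the circle equations for the geodesic from $i$ to $n-\tfrac12$ and extract $c$, $R$, and both real endpoints, showing one endpoint is $\asymp 1/n$ near the origin and the other is $\asymp n$; (2) identify the oriented geodesic through $i$ and $i+2c$ and its endpoints $\xi,\eta$ on $\R$, which by the geometry of Figure~2.2 are precisely these two endpoints up to bounded multiplicative error; (3) apply Proposition~\ref{crf}, compute the cross-ratio $[\,i+2c,\xi,i,\eta\,]$, and simplify using $\xi\asymp 1/n$ and $\eta\asymp n$ to obtain $\log(\eta/\xi)+O(1)=2\log n+O(1)$, which is exactly $d(i,i+2c)\asymp_+2\log n$. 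The main obstacle is step~(2): correctly reading off from the hyperbolic geometry which two boundary points serve as $\xi$ and $\eta$ for the geodesic through $i$ and $i+2c$, and verifying they coincide (up to a bounded factor) with the endpoints of the geodesic from $i$ to $n-\tfrac12$ — this is where the two factors of $n$ that produce $2\log n$ rather than $\log n$ genuinely enter.
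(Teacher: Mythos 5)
There is a genuine gap, and it starts with a false geometric claim: $i$ and $i+2c$ do \emph{not} have the same real part. Since $c$ is a real number (the centre of the semicircle lies on $\R$), the point $i+2c$ has real part $2c$ and imaginary part $1$; it is the horizontal translate of $i$, not a point above it on the imaginary axis. Consequently the geodesic through $i$ and $i+2c$ is not the vertical line $\{\mathrm{Re}=0\}$, and your first cross-ratio computation (yielding $\log\sqrt{1+4c^2}$) is computing the wrong quantity. The discrepancy you then notice between $\log n$ and $2\log n$ is an artifact of this error, not a sign that $c$ grows like $n^2$ (it doesn't: $c=(4n^2-4n-3)/(4(2n-1))\asymp n$, as you correctly derive).

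The key observation you are missing is that $i+2c$ is the reflection of $i$ in the vertical line $\mathrm{Re}(z)=c$, and therefore lies on the \emph{same} semicircle as the geodesic joining $i$ to $n-\tfrac12$: indeed $|i+2c-c|^2=1+c^2=r^2$. So the oriented geodesic through $i$ and $i+2c$ is exactly that semicircle, with startpoint $\xi=c-r=2c-(n-\tfrac12)$ and endpoint $\eta=c+r=n-\tfrac12$ — there is no "up to bounded multiplicative error" to verify; the identification is exact, and this is precisely the step you flag as the main obstacle without resolving it. With these endpoints, Proposition~\ref{crf} gives
\[
d(i,i+2c)=\log\frac{\bigl(i+2c-\xi\bigr)\bigl(i-\eta\bigr)}{\bigl(\xi-i\bigr)\bigl(\eta-(i+2c)\bigr)}=\log\frac{1+(n-1/2)^2}{1+(2c-(n-1/2))^2},
\]
and since $2c-(n-\tfrac12)=-2/(2n-1)$ is bounded while $1+(n-\tfrac12)^2\asymp n^2$, this is $2\log n+O(1)$. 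Note also that your proposed final simplification of the cross-ratio to $\log(\eta/\xi)$ is not the cross-ratio formula: the two factors of $n$ come from the two numerator terms $|i+2c-\xi|$ and $|i-\eta|$, each of modulus $\asymp n$, while both denominator terms are bounded — it is only a coincidence that $\log(\eta/|\xi|)$ with $\eta\asymp n$, $|\xi|\asymp 1/n$ produces the same value $2\log n$.
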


\begin{proof}
By Proposition \ref{crf}, we have that
\begin{eqnarray*}
d(i, i+2c)&=& \log([i+2c, 2c-(n-1/2), i, n-1/2])\\&=&\log\left(\frac{(i+2c-2c+(n-1/2))(i-(n-1/2))}{(2c-(n-1/2)-i)(n-1/2-(i+2c))}\right)
\\&=& \log\left(\frac{1+(n-1/2)^2}{(1+(2c-(n-1/2))^2)}\right).
\end{eqnarray*}
Let $r$ denote the radius of the semi-circle forming the geodesic joining $i$ to $n-1/2$. Then, on the one hand, we have that $r^2=1+c^2$, but on the other hand, $r=n-1/2-c$. So, after some elementary algebra, we obtain that $c=(4n^2-4n-3)/(4(2n-1))$. It follows that
\begin{eqnarray}\label{n2.1}
2c-(n-1/2)=\frac{-2}{2n-1}\ \text{ and so, }\ 1\leq 1+(2c-(n-1/2))^2\leq 5.
\end{eqnarray}
Also, $1+(n-1/2)^2=n^2-n+5/4$, which implies that provided $n\geq2$, we have
\begin{eqnarray}\label{n2.2}
\frac{n^2}{2}\leq1+(n-1/2)^2\leq n^2.
\end{eqnarray}
Combining (\ref{n2.1}) and (\ref{n2.2}) yields that
\[
\log(n^2)-\log 10\leq d(i, i+2c)\leq \log(n^2).
\]
This finishes the proof.

\end{proof}

Let $a$ refer to the point at the top of the semi-circle forming the geodesic between $i$ and $n-1/2$, as in Figure 2.2 above.
From the above lemma, it is immediately apparent from the fact that $a$ is the midpoint of geodesic segment between $i$ and $i+2c$ that
\[
d(i, a)\asymp_+ \log n.
\]

If we now return to the situation we were in before Lemma \ref{forgoodjar}, that is, if we return to the picture of the petals around the parabolic point $p\in\S^1$, we are now in a position to estimate the size of these petals. Let us denote the petal containing 0 by $I_0$ and then call the petals $(I_n)_{n\in\N}$ in sequence as they move around $\S^1$ to the point $p$. We are being a little vague here, because there are actually two sequences of petals clustering down to $p$, but since they are completely symmetric there is no real problem. We have for the hyperbolic distance between 0 and the point $r^{-1}\circ\Phi(i+2c)$, where $r$ is the rotation bringing $p$ to 1 and $c$ is as in Lemma \ref{forgoodjar}, that
\[
d(0,  r^{-1}\circ\Phi(i+2c))\asymp_+ \log (n^2).
\]
We also have, where $a$ is as above,  that
\[
d(0, r^{-1}\circ\Phi(a))\asymp_+ \log n.
\]
Note that $r^{-1}\circ\Phi(a)$ lies on a horoball at $p$ whose shadow contains all the petals $I_k$ for $k\geq n$. We can calculate that the hyperbolic distance from 0 to the top of this horoball is also comparable to $\log n$.  From this and from Proposition \ref{sizeball} it immediately follows that the size of the shadow of this horoball is comparable to $1/n$. Given that this holds for every $n\in\N$, we finally obtain that
\begin{eqnarray}\label{petalsize}
|I_n|\asymp \frac1{n^2}.
\end{eqnarray}

\vspace{2mm}

Let us now consider an arbitrary horoball $H_g$ from the standard set with top representation $\{H_{g}:g\in\mathfrak{T}\}$. Then $H_g=g(H_\gamma)$ and we will assume, again without loss of generality (in view of the choice of horoballs with top representation), that $\tau_g=g(0)$. As already mentioned, each image of the parabolic point $p$ has the same petal structure. It is then straightforward to calculate in a similar way to that above that if $I^{(g)}_n$ denotes the $n$-th petal around $g(p)$, we have that
\begin{eqnarray}\label{ineq}
\left|I^{(g)}_n\right|\asymp e^{-d(0, g(0))}\cdot\frac1{n^2}.
\end{eqnarray}

For Section \ref{final}, we will need the Patterson measure, which we will denote by $\mu$. Again, we assume that the reader is familiar with the definition and basic properties. For the purposes of this paper, all that is necessary is that the Patterson measure is a non-atomic probability measure supported on the limit set $L(G)$, it is $\delta$-conformal and, further, there exists a uniform estimate for the $\mu$-measure of balls in $\mathbb{S}^1$ centred around limit points. This estimate, called the \textit{Global Measure Formula}, was derived in \cite{SV} (see also \cite{Sulivan}). Here, we state the formula in only as much generality as we need, that is, only for a Fuchsian group, rather than a Kleinian group. In order to state the formula, we require the following notation. We define $b(\xi_t)$ to be the intersection of $\mathbb{S}^1$ with the disc whose boundary is  orthogonal to $\mathbb{S}^1$ and which intersects the ray $s_\xi$ orthogonally at $\xi_t$. So $b(\xi_t)$  is an arc of $\mathbb{S}^1$ with radius comparable to $e^{-t}$.  Further, define $k(\xi_t)$ to be equal to 1 if $\xi_t$ is inside some standard horoball $H_{g}$ and  let $k(\xi_t)$ be equal to $\delta$ otherwise.

\begin{lem}(Global Measure Formula). Let $G$ be a non-elementary, geometrically finite Fuchsian group with parabolic elements. If $\xi\in L(G)$ and $t$ is positive, then
\[
\mu(b(\xi_t))\asymp e^{-t\delta}e^{-(\delta-k(\xi_t))\Delta(\xi_t)}.
\]
\end{lem}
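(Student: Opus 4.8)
The plan is to treat separately the two regimes singled out by the definition of $k(\xi_t)$, reducing each to facts already available: the shadow estimate \eqref{sizeball} and Remark \ref{shrem1}, the petal estimates \eqref{petalsize} and \eqref{ineq}, the $\delta$-conformality of $\mu$, and the classical shadow lemma of Sullivan \cite{Sulivan}. Consider first the regime in which $\xi_t$ lies outside every standard horoball, so that $k(\xi_t)=\delta$ and the claim reads $\mu(b(\xi_t))\asymp e^{-t\delta}$. Since $G$ is geometrically finite, the part of $\D^2$ not covered by the standard horoballs is $G$-cocompact, so there are a constant $\rho_0=\rho_0(G)$ and an element $g\in G$ with $d(\xi_t,g(0))\le\rho_0$, whence $d(0,g(0))\asymp_{+}t$. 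Then $b(\xi_t)$ is comparable to the shadow $\Pi\!\left(B(g(0),\rho_0)\right)$, which has diameter $\asymp e^{-d(0,g(0))}$ by Remark \ref{shrem1} and $\mu$-measure $\asymp e^{-\delta d(0,g(0))}\asymp e^{-\delta t}$ by Sullivan's shadow lemma; this settles the first regime.

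In the second regime, $\xi_t$ lies inside a standard horoball $H_g$ at depth $\Delta:=\Delta(\xi_t)$, so $k(\xi_t)=1$ and the claim reads $\mu(b(\xi_t))\asymp e^{-\delta t}e^{-(\delta-1)\Delta}$. First I would compute the $\mu$-mass of an individual petal about $g(p)$: the stabiliser of $g(p)$ is infinite cyclic and permutes the petals $I_n^{(g)}$, so iterating the parabolic generator and combining $\delta$-conformality with the bounded distortion of M\"obius maps on a fixed arc gives $\mu\!\left(I_n^{(g)}\right)\asymp\bigl|I_n^{(g)}\bigr|^{\delta}\asymp e^{-\delta d(0,g(0))}\,n^{-2\delta}$, the last step by \eqref{ineq}. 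Next I would identify $b(\xi_t)$, up to bounded distortion of its radius $e^{-t}$, with a block $\bigcup_{k\in E}I_k^{(g)}$ of consecutive petals clustering towards $g(p)$; the index set $E$ is read off from the hyperbolic length from $0$ to the point where $s_\xi$ enters $H_g$, from $\Delta$, and from whether $\xi_t$ lies on the ingoing or the outgoing leg of the cusp excursion, all of which are controlled by the distance computations behind Lemma \ref{forgoodjar} and \eqref{petalsize}. Summing the petal masses over $k\in E$, and using that $\delta>1/2$ so that $\sum_{k\ge N}k^{-2\delta}\asymp N^{1-2\delta}$, then yields in every case the value $e^{-\delta t}e^{-(\delta-1)\Delta}$ up to a multiplicative constant.

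The evaluation of this geometric series is routine; the main obstacle I anticipate is the geometric bookkeeping that produces the index set $E$ --- concretely, verifying in the handful of sub-cases ($\xi_t$ near the entry point of $H_g$, $\xi_t$ on the ingoing leg, $\xi_t$ on the outgoing leg) that the chosen petals have total diameter $\asymp e^{-t}$ and that their combined $\mu$-mass works out to $e^{-\delta t}e^{-(\delta-1)\Delta}$. This is precisely where the interdependence of $t$, $\Delta$ and the shadow size of $H_g$ must be pinned down. The passage to groups with finitely many inequivalent parabolic points requires no new idea, by Remark \ref{onepara}, and the full argument in the Kleinian setting is given in \cite{SV} (see also \cite{Sulivan}).
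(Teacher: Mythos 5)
The paper does not actually prove this lemma: it is quoted verbatim from Stratmann--Velani \cite{SV} (see also \cite{Sulivan}), so there is no in-paper argument to compare yours against. Your two-regime strategy is, however, precisely the strategy of \cite{SV}: on the thick part, bounded distance from $\xi_t$ to the orbit $G(0)$ plus Sullivan's shadow lemma gives $\mu(b(\xi_t))\asymp e^{-\delta t}$; inside a standard horoball, $\delta$-conformality and (\ref{ineq}) give $\mu(I_n^{(g)})\asymp e^{-\delta d(0,g(0))}n^{-2\delta}$, and one sums over the petals met by $b(\xi_t)$. The bookkeeping you defer does close: with $t_0\asymp d(0,g(0))$ the entry time and $d$ the maximal depth, on the ingoing leg ($t=t_0+\Delta$) the arc $b(\xi_t)$ is comparable to $\bigcup_{k\gtrsim e^{\Delta}}I_k^{(g)}$ and $\sum_{k\geq e^{\Delta}}k^{-2\delta}\asymp e^{(1-2\delta)\Delta}$ yields $e^{-\delta t}e^{(1-\delta)\Delta}$; on the outgoing leg ($t=t_0+2d-\Delta$) the arc meets $\asymp e^{\Delta}$ petals of index $\asymp e^{d}$ and the same value results.

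That said, as written this is a plan rather than a proof: the identification of the index set $E$ and the case analysis you flag as ``the main obstacle'' \emph{is} the entire content of the lemma, and it is not carried out. Three further points need attention. First, the convergence and tail asymptotics of $\sum k^{-2\delta}$ require $\delta>1/2$; this holds for any non-elementary Fuchsian group with a parabolic element by Beardon \cite{ineq}, but it must be invoked. Second, it is the complement of the horoballs inside (a bounded neighbourhood of) the convex hull of $L(G)$ that is $G$-cocompact, not the complement in all of $\D^2$; this suffices here only because $\xi_t$ lies on a ray towards a limit point. Third, near the entry and exit of the horoball, where $b(\xi_t)$ is comparable in size to a single petal, the lower bound does not follow from the petal sum alone (the restriction of $\mu$ to one petal is a distorted copy of $\mu$ on $I_0$ and need not spread its mass evenly), and the cleanest fix is to observe that there $\xi_t$ is within bounded distance of $G(0)$ and to fall back on the shadow-lemma case.
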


\section{Good sets}
In this section, we give the proof of our first main result.

{\em Proof of Theorem 1.} First let $\kappa>0$ be given and let us consider $\dim_{H}\left({\mathcal G}_{\tau, \kappa}(G)\right)$. We begin with the upper bound. Notice that the set ${\mathcal G}_{\tau, \kappa}(G)$ can be covered by any of the families
\[
\{\Pi(H_{g_1}(\xi)):\xi\in {\mathcal G}_{\tau,\kappa}(G)\},
\]
\[\vdots\]
\[\left\{\Pi(H_{g_k}(\xi)):\xi\in {\mathcal G}_{\tau,\kappa}(G)\right\}
\]
\[\vdots\]
Eventually, if $k$ is chosen sufficiently large, the cover $\{\Pi(H_{g_k}(\xi)):\xi\in {\mathcal G}_{\tau,\kappa}(G)\}$ will consist of sets of diameter less than any fixed positive $\delta$. Now, let $s=\frac12(1+\var_\tau)$, where $\var_\tau$ is chosen such that $\var_\tau<1$ and $1/\log(\lfloor\tau\rfloor-1)\leq\var_\tau/(-\log\var_\tau)$. Then,
note that for each $\xi \in \mathcal{B}_\kappa(G)$, the shadows of the standard horoballs intersected by the ray $s_\xi$ form a nested sequence of intervals of $\mathbb{S}^1$ which cluster down to the point $\xi$. We can associate to the sequence of horoballs a sequence of positive integers $a_1(\xi), a_2(\xi), \ldots$ with the property that
\[
\log(a_n(\xi))\leq d_n(\xi)<\log(a_n(\xi)+1).
\]
Consequently, for any $\xi\in {\mathcal  B}_{\kappa}(G)$ and any $n\in\N$, by Proposition \ref{sizeball} above, we have the following estimate.
\begin{eqnarray}\label{eqn1}
\ \ \ \ \frac{1}{ e^{(n+1)\kappa}((a_1(\xi)+1)\cdots(a_n(\xi)+1))^2}\ll|\Pi(H_{g_n}(\xi))|\ll  \frac{1}{(a_1(\xi)\cdots a_n(\xi))^2}.
\end{eqnarray}

It then follows, by the choice of $\var_\tau$,  that
\begin{eqnarray*}
\mathcal{H}^s_\delta({\mathcal C}_{\tau,\kappa}(G))&\leq& \sum_{\xi\in{\mathcal C}_{\tau,\kappa}(G)}|\Pi(H_{g_k}(\xi))|^s\\
&\ll&\sum_{a_1\geq\lfloor\tau\rfloor}\frac1{a_1^{2s}}\left(\sum_{a_2\geq\lfloor\tau\rfloor}\frac1{a_2^{2s}}\cdots\left(
\sum_{a_k\geq\lfloor\tau\rfloor}\frac1{a_k^{2s}}\right)\cdots\right)\\&\ll&\left(\int_{\lfloor\tau\rfloor}^\infty\frac{1}{x^{2s}}\ dx \right)^k\\
&=&\left(\frac{1}{\var_\tau(\lfloor\tau\rfloor-1)^{\var_\tau}}\right)^k<1.
\end{eqnarray*}
As this is true for any arbitrary $\delta>0$, it follows that $\mathcal{H}^s({\mathcal C}_{\tau,\kappa}(G))$ is finite and consequently that $\dim_H({\mathcal C}_{\tau,\kappa}(G))\leq s=\frac{1}{2}(1+\var_\tau)$. If  we then choose $\var_\tau$ in such a way that $\lim_{\tau\to\infty}\var_\tau=0$, we obtain the desired upper bound, 
that is,
\[
\lim_{\tau\to\infty}\dim_{H}({\mathcal C}_{\tau,\kappa}(G))\leq\frac12.
\]

For the lower bound, again fix $\tau\geq3$ and $\kappa>0$. We first describe a subset of the set in question and then employ Frostman's Lemma to estimate from below the dimension of this subset. So, to that end, choose $\tau'$ to satisfy the equation
\[
\sum_{i=\lfloor\tau\rfloor}^{\lfloor\tau'+1\rfloor}\frac1{i+1}>e^{\kappa/2}.
\]
Denote this sum by $S$. Let $\mathcal{C}_{\tau', \kappa}(G)$ be the set
\[
\mathcal{C}_{ \tau', \kappa}(G):=\{\xi\in\mathcal{B}_\kappa:\log{\tau}< d_n(\xi)\leq \log{\tau'}\text{ for all }n\in\N\}.
\]
Let $\nu$ be a measure supported on the limit set $L(G)$ with the property that
\[
\nu\left(\Pi(H_{g_k}(\xi))\right)=\frac1{S^k}\cdot\frac{1}{(a_1(\xi)+1)\cdots(a_k(\xi)+1)}\leq \frac{c_1\ e^{((k+1)\kappa)/2}}{S^k}|\Pi(H_{g_k}(\xi))|^\frac12,
\]
where $c_1$ is a constant. Note that by the choice of $\tau'$, the term $c_1\left(e^{\kappa/2}/{S}\right)^ke^{\kappa/2}$ is simply another constant, say $c_2$. Now, let $\xi\in\mathcal{C}_{\tau', \kappa}$ and let $r>0$. Then choose the first $k$ such that the shadow of the  $(k+1)$th level horoball $H_{g_{k+1}}(\xi)$ is at most equal to $r$, that is, choose $k$ such that
\[
|\Pi(H_{g_{k+1}}(\xi))|\leq r<|\Pi(H_{g_{k}}(\xi))|.
\]
Note that for each $k\in\N$ there can  only be  a fixed finite number of petals around any point $g_k(p)$ in which it is possible for a point in the set $\mathcal{C}_{ \tau', \kappa}(G)$ to end up. Therefore there can only be a fixed finite number of horoballs that $B(\xi, r)$ could possibly intersect at each level $k$ and furthermore, each of these shadows has comparable $\nu$-measure, so,   without loss of generality we suppose that
\[
 \Pi(H_{g_{k+1}}(\xi))\subset B(\xi, r) \subset \Pi(H_{g_{k}}(\xi)).
\]
We now need to compare  the sizes of the above shadows. Directly from Proposition \ref{sizeball}, we obtain that
\[
\frac{|\Pi(H_{g_{k}}(\xi))|}{|\Pi(H_{g_{k+1}}(\xi))|}\asymp e^{d(0, \tau_{g_{k+1}}(\xi))-d(0, \tau_{g_{k}}(\xi))}.
\]
It can be shown, via the cross-ratio distance formula again, that if $z_{g_k}$ denotes the point that the geodesic segment joining 0 and $\tau_{g_{k+1}}$ first intersects the horoball $H_{g_k}(\xi)$, then $d(0, \tau_{g_k})\asymp d(0, z_{g_k})$. It follows that
\[
d(0, \tau_{g_{k+1}})\asymp d(0, \tau_{g_{k}})+2d_k(\xi)+\kappa.
\]
Therefore, keeping in mind that $d_k(\xi)\leq \log \tau'$, we obtain that
\[
d(0, \tau_{g_{k+1}}(\xi))-d(0, \tau_{g_{k}}(\xi))\ll \log \tau'+\kappa.
\]
Finally, then, since $\tau'$ is fixed for each $\tau$, there exists another constant $c_3$ such that $|\Pi(H_{g_{k}}(\xi))|\leq (c_3)^2|\Pi(H_{g_{k+1}}(\xi))|$. Then,
\begin{eqnarray*}
\nu(B(\xi, r)&\leq&\nu(\Pi(H_{g_{k}}(\xi)))\leq c_2|\Pi(H_{g_{k}}(\xi))|^\frac12\\
&\leq&c_2c_3|\Pi(H_{g_{k+1}}(\xi))|^\frac12\\&\leq&c_4\cdot r^\frac12,
\end{eqnarray*}
where $2c_4:=c_2c_3$. Thus, by Frostman's Lemma, we have that $\dim_H(\mathcal{G}_{\tau', \kappa}(G))\geq 1/2$ and therefore $\dim_H(\mathcal{G}_{\tau, \kappa}(G)\geq1/2$, too. Combining this and the upper bound obtained previously, we have that
\[
\lim_{\tau\to\infty}\dim_H\left(\mathcal{G}_{\tau, \kappa}(G)\right)=\frac12.
\]
Therefore, as the choice of $\kappa>0$ was arbitrary, the proof is finished.

\bewend

\section{Strict Jarn\'{\i}k Sets}\label{jarstar}

In preparation for the proof of Theorem 2, we will first prove the following lemma. Before stating the lemma, we need some notation. So let $s:=(s_n)_{n\in\N}$ be a sequence of positive integers such that $\lim\limits_{n\to\infty}s_n=\infty$ and
\[
\limsup_{n\to\infty}\frac{\log(s_n)}{2\log(s_1\ldots s_{n-1})}=\omega.
\]
Recall the definition of $d_n(\xi)$ from the introduction. Then, with a constant $\kappa>0$ and a positive integer $N>3$, define the set $F_{s, N, \kappa}(G)$ to be
\[
F_{s, N, \kappa}(G):=\{\xi\in\mathcal{B}_\kappa: \log s_n\leq d_n(\xi)< \log{Ns_n}\ \text{ for all } n\in\N\}.
\]
Further, define \[F_{s, \kappa}(G):=\bigcup_{N>3}F_{s, N, \kappa}.\]
We then have the following result.

\begin{lem}\label{chinese}For each $\kappa>0$, we have that
\[
\dim_H(F_{s, \kappa}(G))=\frac{1}{2(1+\omega)}.
\]
\end{lem}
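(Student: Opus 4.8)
The plan is to establish matching upper and lower bounds for $\dim_H(F_{s,\kappa}(G))$, following the same covering-and-Frostman scheme used in the proof of Theorem 1, but now tracking the more delicate accumulation of cusp-excursion depths encoded in the sequence $s=(s_n)$. The starting point is the basic shadow estimate: as in~\eqref{eqn1}, for $\xi\in F_{s,N,\kappa}(G)$ one controls $|\Pi(H_{g_n}(\xi))|$ from above and below in terms of $e^{-d(0,\tau_{g_n})}$, and the recursion $d(0,\tau_{g_{n+1}})\asymp d(0,\tau_{g_n})+2d_n(\xi)+\kappa$ from Section~3 gives, after telescoping and using $\log s_n\le d_n(\xi)<\log(Ns_n)$,
\[
|\Pi(H_{g_n}(\xi))|\asymp_{N,\kappa} \frac{1}{(s_1\cdots s_n)^2}
\]
up to multiplicative errors of the form $e^{n\kappa}N^{2n}$, which are harmless for dimension purposes since they are sub-exponential relative to $(s_1\cdots s_n)^{2s}$ once $s>1/(2(1+\omega))$ — this is exactly where the $\limsup$ condition defining $\omega$ enters.

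For the upper bound I would fix $N>3$ and cover $F_{s,N,\kappa}(G)$ at level $n$ by the shadows $\{\Pi(H_{g_n}(\xi))\}$. Unlike in Theorem~1, the petal index $a_n(\xi)$ is essentially pinned to the window $[s_n, Ns_n)$, so the level-$n$ sum of $s$-th powers of diameters is comparable to
\[
\sum_{\xi}|\Pi(H_{g_n}(\xi))|^s \ll (e^{n\kappa}N^{2n})^s\, \frac{(\text{number of admissible petals})^n}{(s_1\cdots s_n)^{2s}}\asymp \frac{C_{N,\kappa,s}^{\,n}}{(s_1\cdots s_n)^{2s}},
\]
since there are only boundedly many petals around each $g_k(p)$ that a point of $F_{s,N,\kappa}$ can enter (the same finiteness remark used in Section~3 via~\eqref{petalsize}). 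The claim is that for any $s>1/(2(1+\omega))$ this tends to $0$ as $n\to\infty$: writing $P_n:=s_1\cdots s_n$, the $\limsup$ hypothesis says $\log s_n\le (2+\epsilon)\,\omega\log P_{n-1}$ for $n$ large along the relevant subsequence and $\log s_n$ is eventually $\ge$ anything fixed, so $\log P_n = \log P_{n-1}+\log s_n$ grows in a way that forces $2s\log P_n - n\log C_{N,\kappa,s}\to\infty$ precisely when $2s(1+\omega)>1$. Taking the countable union over $N$ and then $s\downarrow 1/(2(1+\omega))$ gives $\dim_H(F_{s,\kappa}(G))\le 1/(2(1+\omega))$.

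For the lower bound I would, as in the proof of Theorem~1, restrict to a subset on which a Cantor-type measure $\nu$ can be built: fix $N$ large enough (depending on $\kappa$) that $\sum_{i=s_n}^{Ns_n-1}\tfrac1{i+1}>e^{\kappa/2}$ for all $n$ (possible since this sum is $\asymp \log N$), let $S_n$ denote that sum, and define $\nu$ on the $k$-th level shadow by $\nu(\Pi(H_{g_k}(\xi)))=\prod_{j=1}^k \big(S_j(a_j(\xi)+1)\big)^{-1}$, the natural Kolmogorov-consistent assignment. Comparing consecutive levels exactly as in Section~3 — $|\Pi(H_{g_k})|/|\Pi(H_{g_{k+1}})|\asymp e^{2d_k(\xi)+\kappa}\le (Ns_k)^2 e^\kappa$ — and using $\nu(\Pi(H_{g_{k+1}}))/\nu(\Pi(H_{g_k})) = 1/(S_{k+1}(a_{k+1}+1))\ge 1/(S_{k+1} Ns_{k+1})$, one gets
\[
\nu(\Pi(H_{g_{k+1}}(\xi)))\ll |\Pi(H_{g_{k+1}}(\xi))|^{\,t}\quad\text{for } t=\frac{1}{2(1+\omega)}-\epsilon',
\]
again because the exponential-in-$k$ discrepancies between $\prod S_j s_j$ and $\prod s_j^{1+\omega}$ wash out against the $\limsup$ rate. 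A ball $B(\xi,r)$ is sandwiched between two consecutive shadows up to a bounded number of comparable-measure petals (the same argument as in Theorem~1, using that $d_k(\xi)<\log(Ns_k)$ is finite so the size ratio of the sandwiching shadows is controlled), so $\nu(B(\xi,r))\ll r^{t}$ and Frostman's Lemma (Lemma~\ref{frost}) gives $\dim_H(F_{s,N,\kappa}(G))\ge t$; letting $\epsilon'\downarrow 0$ finishes it.

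The main obstacle, and the place where this differs substantively from Theorem~1, is making the $\limsup$ condition do its job uniformly: the multiplicative errors $e^{n\kappa}$, $N^{2n}$, and $S_1\cdots S_n$ are genuinely exponential in $n$, so one must verify that $\log P_n / n\to\infty$ (equivalently, that $s_n\to\infty$ forces the partial products to grow super-geometrically) and then that the exponent $1/(2(1+\omega))$ is exactly the critical value balancing $P_n^{2s}$ against $P_n^{2}\cdot(\text{geometric})$. I would isolate this as a short separate computation — essentially: if $\limsup \log s_n / (2\log P_{n-1})=\omega$ then $\limsup \log P_n/(2\log P_{n-1})\le 1+\omega$ and the series $\sum_n C^n/P_n^{2s}$ converges iff $s>1/(2(1+\omega))$ — after which both bounds follow from the now-routine adaptation of Section~3. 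A secondary technical point is the passage from $F_{s,N,\kappa}$ to the union $F_{s,\kappa}$: the upper bound survives a countable union, and for the lower bound one simply notes $F_{s,N,\kappa}\subseteq F_{s,\kappa}$ for the large $N$ chosen above, so monotonicity of $\dim_H$ suffices.
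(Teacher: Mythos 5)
Your overall strategy (cover by horoball shadows above, Cantor measure plus Frostman below) is the right one, but both halves have a genuine gap at precisely the point where this lemma differs from Theorem~1: the admissible petal indices $a_n(\xi)\in[s_n,Ns_n)$ now tend to infinity with $n$, so the counting and sandwiching arguments from Section~3 do not transfer.

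\emph{Upper bound.} The number of covering elements at level $n$ is not $C^n$ with a bounded base: a point of $F_{s,N,\kappa}(G)$ may sit in any of roughly $(N-1)s_m$ petals around $g_m(p)$ for each $m\le n$, so the count is comparable to $(N-1)^n s_1\cdots s_n$. (The bounded-petal remark you cite controls how many petals a small \emph{ball} can intersect, not how many a \emph{point} of the set can occupy; in Theorem~1 the indices lie in the fixed window between $\tau$ and $\tau'$, which is why the count is bounded there.) With the correct count, a cover by the full shadows $\Pi(H_{g_n}(\xi))$, of size $\asymp (s_1\cdots s_{n-1})^{-2}$, yields critical exponent $\tfrac12$ and nothing better. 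Moreover your criterion ``$\sum_n C^n/(s_1\cdots s_n)^{2s}$ converges iff $s>1/(2(1+\omega))$'' is false: since $\log(s_1\cdots s_n)/n\to\infty$, that series converges for \emph{every} $s>0$, so the formula and the criterion cannot both be right. The missing idea is to refine the cover using the depth of the \emph{next} excursion: since $d_{n+1}(\xi)\ge\log s_{n+1}$, the ray enters the shrunken horoball whose top is pushed down by $\log s_{n+1}$ and whose shadow is smaller by the factor $s_{n+1}$, as in (\ref{label}). Only then does the exponent become $\liminf_n\log(s_1\cdots s_n)/\log((s_1\cdots s_n)^2s_{n+1})=1/(2(1+\omega))$, and one evaluates the level sums along a subsequence realising this liminf rather than demanding a convergent series.

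\emph{Lower bound.} The sandwich step breaks. The ratio of consecutive shadow sizes is $\asymp e^{2d_k(\xi)+\kappa}\asymp s_k^2$, which is \emph{unbounded} in $k$, so a ball with $|\Pi(H_{g_{k+1}}(\xi))|\le r<|\Pi(H_{g_k}(\xi))|$ is comparable to neither shadow. Bounding $\nu(B(\xi,r))$ by $\nu(\Pi(H_{g_k}(\xi)))\asymp C^{-k}(s_1\cdots s_{k-1})^{-1}$ while only knowing $r\gg(s_1\cdots s_k)^{-2}$ yields $\nu(B(\xi,r))\ll r^t$ only for $t\le 1/(2(1+2\omega))$, strictly short of the target whenever $\omega>0$; your per-cylinder estimate is fine, but it does not imply the ball estimate Frostman needs. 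For intermediate radii one must count the roughly $r\cdot(s_1\cdots s_k)^2$ level-$(k+1)$ shadows contained in $B(\xi,r)$, sum their masses, and interpolate with the trivial bound via $\min\{a,b\}\le a^{1-t}b^t$; splitting the radii into the two regimes (\ref{case1}) and (\ref{case2}) determined by the shrunken horoballs is what makes this work and recovers the full exponent $1/(2(1+\omega))$.
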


Before starting the proof of the lemma, let us gather a few useful facts that will be required in the proof.

\begin{itemize}
  \item Since $\lim\limits_{n\to\infty}s_n=\infty$, it follows that $\lim\limits_{n\to\infty}\log s_n=\infty$ and thus that
\begin{eqnarray}\label{star}
\lim_{n\to\infty}\frac{\log(s_1\ldots s_n)}{n}=\infty.
\end{eqnarray}
  \item Define \[\rho:=\liminf_{n\to\infty}\frac{\log (s_1\ldots s_n)}{\log( (s_1\ldots s_n)^2s_{n+1})}=\frac1{2(1+\omega)}.\]
Then, for all $K>0$, we have that
\begin{eqnarray}\label{starstar}
\liminf_{n\to\infty}\frac{\log (s_1\ldots s_n)}{\log ((K^ns_1\ldots s_n)^2s_{n+1})}=\rho.
\end{eqnarray}
Indeed, if we write
\[
\frac{\log (s_1\ldots s_n)}{\log ((K^ns_1\ldots s_n)^2s_{n+1})}=\frac{\log (s_1\ldots s_n)}{\log ((s_1\ldots s_n)^2s_{n+1})}\cdot\frac{1}{1+\frac{2n\log K}{\log ((s_1\ldots s_n)^2s_{n+1})}},
\]then the statement in (\ref{starstar}) follows immediately from (\ref{star}) and the simple analytical fact that if $(b_n)_{n\in\N}$ is a sequence with each $0<b_n<1$  and $\lim_{n\to\infty}b_n=1$ and if $(a_n)_{n\in\N}$ is a sequence of positive real numbers, then $\limsup_{n\to\infty}\  a_nb_n=\limsup_{n\to\infty} \ a_n$. We also note here that this analytical observation will  be used repeatedly in the proof of Theorem 2.
  \item For all $K>0$, all $\rho'<\rho$ and sufficiently large $n\in\N$, we have that
\begin{eqnarray}\label{starx3}
\frac{1}{s_1\ldots s_n}\leq \left(\frac{1}{K^{2n}(s_1\ldots s_n)^2s_{n+1}}\right)^{\rho'}.
\end{eqnarray}
This follows directly from (\ref{starstar}) and the definition of the lower limit.
\end{itemize}

\begin{proof}[Proof of Lemma \ref{chinese}.] Fix a sequence $(s_n)_{n\in\N}$, $\kappa>0$ and $N>3$ as in the statement of the lemma.
 Again, we first establish the upper bound, then the lower bound. To begin, just as in the proof of Theorem 1, we can associate to each point $\xi\in F_{s, \kappa, N}(G)$ a sequence of positive integers $(a_n(\xi))_{n\geq1}$, where $a_n(\xi)$ is determined by
 \[
\log(a_n(\xi))\leq d_n(\xi)< \log(a_n(\xi)+1).
\]
Notice that this implies that the point $\xi$ lies, up to some constant of comparability, in the $(a_n(\xi))$-th petal around the point $g_n(p)$, for each $n\in\N$. We therefore have, from Proposition \ref{sizeball} and the  extra information given by the sequence $(s_n)_{n\geq1}$, that
\begin{eqnarray*}
\frac{1}{e^{(n+1)\kappa}(N^ns_1\ldots s_n)^2}\ll|\Pi(H_{g_{n+1}}(\xi))|\ll\frac{1}{( a_1(\xi)\ldots a_n(\xi))^2}\leq\frac{1}{(s_1\ldots s_n)^2}.
\end{eqnarray*}
For each positive integer $n$, define the ``shrunken'' horoball $\widetilde{H}_{g_{n}}(\xi)$ to be the horoball with base point $g_{n}(p)$ and top $\tilde{\tau}_{g_{n}}$ given by
\[
d(0, \tilde{\tau}_{g_{n}})=d(0, {\tau}_{g_{n}})+\log s_{n}.
\]
It follows immediately that
\begin{eqnarray}\label{label}
\frac{1}{e^{(n+1)\kappa}(N^ns_1\ldots s_n)^2s_{n+1}}\ll|\Pi(\widetilde{H}_{g_{n+1}}(\xi))|\ll\frac{1}{(s_1\ldots s_n)^2s_{n+1}}.
\end{eqnarray}

We will now provide the upper bound. This is based on covers of arbitrarily small diameter for the set $F_{s, \kappa, N}(G)$, which are  given by the shrunken horoballs defined above. First, let us make the observation that if $\xi\in F_{s, \kappa, N}(G)$, it follows that $\log s_n\leq d_n(\xi)<\log (Ns_n)$  and thus that $\xi$ could lie in any of the petals around $g_n(p)$ from the $s_n$-th up to the $(Ns_n)$-th. So, there are $c(N-1)s_n$ shrunken horoballs in the $n$-th layer that the point $\xi$ could lie in the shadow of, where $c$ is the fixed constant number of these horoballs that have their base point in any given petal.

Now, by the definition of $\rho$ given above, it follows that if we let $\rho'\in(\rho, 3\rho)$, we have for all sufficiently large $n$ that
\[
\frac{\rho'-\rho}{2}\leq \frac{\log (s_1\ldots s_n)}{\log((s_1\ldots s_n)^2s_{n+1}) }.
\]
Consequently, from the identity $(1/b)^{\log a/\log b}=1/a$, we obtain the inequality
\[
\left(\frac1{(s_1\ldots s_n)^2s_{n+1}}\right)^{(\rho'-\rho)/2}< \left(\frac{1}{(s_1\ldots s_n)^2s_{n+1}}\right)^{\log (s_1\ldots s_n)/\log ((s_1\ldots s_n)^2s_{n+1})}=\frac{1}{s_1\ldots s_n}.
\]
In other words,
\[
s_1\ldots s_n\leq ((s_1\ldots s_n)^2s_{n+1})^{(\rho'-\rho)/2}.
\]
Then we observe by equation (\ref{star}) that for all sufficiently large $n$ we have that $\log(N-1)<\log (s_1\ldots s_n)/n\ $ (since the left-hand side is simply a constant depending only on $N$). Therefore, for sufficiently large $n$,
\begin{eqnarray}\label{no1}
(N-1)^n<((s_1\ldots s_n)^2s_{n+1})^{(\rho'-\rho)/2}.
\end{eqnarray}
Also directly from the definition of $\rho$, for any $\rho'>\rho$, there exists a sequence $(n_k)_{k\in\N}$ with the property that
\[
\frac{\log (s_1\ldots s_{n_k})}{\log ((s_1\ldots s_{n_k})^2s_{{n_k}+1})}\leq \frac{\rho'+\rho}{2}, \  \text{ for all }k\geq1.
\]
Hence, on rearranging the above expression, we obtain that \begin{eqnarray}\label{no2}s_1\ldots s_{n_k}\leq  ((s_1\ldots s_{n_k})^2s_{{n_k}+1})^{(\rho'+\rho)/{2}}.\end{eqnarray} Consequently, if we neglect any terms of the sequence $(n_k)_{k\in\N}$ that are too small and rename the sequence accordingly, we have on combining (\ref{no1}) and (\ref{no2}) that for all $k\geq1$ and any $\rho'>\rho$,
\[
(N-1)^{n_k} s_1\ldots s_{n_k}<((s_1\ldots s_{n_k})^2s_{{n_k}+1})^{\rho'}.
\]
Thus, from (\ref{label}) and the above inequality, we infer that
\begin{eqnarray*}
\mathcal{H}^{\rho'}(F_{s,\kappa, N}(G))&\leq& \liminf_{k\to \infty}\sum_{{ s_m\leq a_m(\xi)< Ns_m}\atop{1\leq m \leq n_k}}|\Pi(\widetilde{H}_{g_{n_k+1}}(\xi))|^{\rho'}\\&\ll&
\left((N-1)^{n_k}s_1\ldots s_{n_k}\right)\left((s_1\ldots s_{n_k})^2s_{{n_k}+1}\right)^{-\rho'}\leq1.
\end{eqnarray*}
Hence, as this is true for all $\rho'\in(\rho, 3\rho)$, it follows that \[\dim_H(F_{s, \kappa,N}(G))\leq \rho=1/(2(1+\omega)).\]

For the lower bound, we will again use Frostman's Lemma. To that end, define, in the usual way as a weak limit of a sequence of finite Borel measures (see e.g. \cite{Fal} and \cite{bill}), a measure $m$ on the limit set $L(G)$ with the property that
\begin{eqnarray*}
m(\Pi(H_{g_{k+1}}(x))=\frac{1}{a_1(x)\ldots a_k(x)}.
\end{eqnarray*}
Let $\xi\in F_{s, \kappa,N}(G)$. Then for each small enough $r>0$, we can find a unique $k$ such that
\begin{eqnarray*}
|\Pi(\widetilde{H}_{g_{k+1}}(\xi))|\leq r <|\Pi(\widetilde{H}_{g_{k}}(\xi))|.
\end{eqnarray*}
We consider two further possibilities. Either,
\begin{eqnarray}\label{case1}
|\Pi(\widetilde{H}_{g_{k+1}}(\xi))|\leq r <|\Pi({H}_{g_{k+1}}(\xi))|,
\end{eqnarray}
or,
\begin{eqnarray}\label{case2}
|\Pi({H}_{g_{k+1}}(\xi))|\leq r <|\Pi(\widetilde{H}_{g_{k}}(\xi))|.
\end{eqnarray}

\vspace{3mm}

First note that by inequality (\ref{starx3}),  if we let $\rho'<\rho$, then there exists $n_0$ such that for all $n\geq n_0$, we have that
\begin{eqnarray}\label{liminfdefn}
\frac{1}{s_1\ldots s_{n}}\leq \left(\frac{1}{(e^{\kappa})^{n+1}(N^{n}s_1\ldots s_{n})^2s_{n+1}}\right)^{\rho'}.
\end{eqnarray}
Now, suppose that we are in the situation of (\ref{case1}). Choose $r$ so that $k+1\geq n_0$. In order to estimate the $m$-measure of the ball $B(\xi, r)$, we must first identify the number of shadows of standard horoballs in the $(k+1)$-th layer that said ball can intersect. Since there are a fixed number in each petal, it suffices to calculate the number of petals around $g_k(p)$ that the ball $B(\xi, r)$ can intersect. First of all, note that for large enough $k$, $B(\xi, r)$ cannot extend further than the $(a_k(\xi)-1)$-th petal, because the petals are decreasing in size. To finish the proof that $B(\xi, r)$, with small enough $r$,  can only intersect a finite number of petals around $g_k(p)$, we must show that these petals are not shrinking too fast. It suffices to show that where $c$ comes from the  comparability given in (\ref{ineq}), there exists a $k_0\in\N$ such that if $k>k_0$ there exists $M$ such that
\begin{eqnarray}\label{ineq2}
\sum_{i=1}^M \frac1{(a_k(\xi)+i)^2}\geq \frac{c^2}{(a_k(\xi))^2}.
\end{eqnarray}
This follows immediately from the fact that the $a_k$s are increasing and that $\sum_{i=1}^\infty k^2/(k+i)^2>k-1 $ for all $k\in\N$. 
Consequently, $B(\xi, r)$ can only intersect a fixed finite number of petals around $g_k(p)$ for each sufficiently large $k\in\N$ and can thus only intersect a fixed finite number of shadows of standard horoballs in each layer. It follows, via (\ref{liminfdefn}), (\ref{label}) and (\ref{case1}), that for any $\rho'<\rho$ we have
\begin{eqnarray*}
m(B(\xi, r))&\ll& m\left(\Pi(H_{g_{k+1}}(\xi))\right)\\
&=&\frac{1}{a_1(\xi)\cdots a_{k}(\xi)}\leq \frac{1}{s_1\cdots s_k}\\
&\leq& \left(\frac{1}{e^{\kappa(k+1)}(N^ks_1\cdots s_k)^2s_{k+1}}\right)^{\rho'}\\&\ll&r\ ^{\rho'}.
\end{eqnarray*}

If we are in the situation of (\ref{case2}), it is clear, by similar reasoning to that above, that $B(\xi, r)$ cannot intersect more than two petals in the $k$-th layer, which means that there is again only a fixed finite number of shadows of shrunken horoballs that $B(\xi, r)$ can intersect. In addition to this, a maximum of $2r(\kappa N^2)^{k}(s_1\ldots s_{k})^2$ of the $(k+1)$-th layer shadows of standard horoballs are intersected by $B(\xi, r)$. So, denoting by $\Pi(\widetilde{H}_{g_{k}})$ and $\Pi(H_{g_{k+1}})$ the largest possible shadow in layer $k$ and $k+1$ respectively, we have that
\begin{eqnarray*}
m(B(\xi, r))&\ll&\min\{2m(\Pi(\widetilde{H}_{g_{k}})), 2r(\kappa N^2)^{k}(s_1\cdots s_{k})^2m(\Pi(H_{g_{k+1}}))\}\\
&\leq &\frac{2}{s_1 \cdots s_k}\min\{1, (\kappa N^2)^{k+1}(s_1\cdots s_{k})^2s_{k+1}r \}
\end{eqnarray*}
and, using equation (\ref{liminfdefn}) and the fact that  $\min\{a, b\}\leq a^{1-s}b^s$ for any $0<s<1$, it follows that for $\rho'<\rho$ we have
\begin{eqnarray*}
m(B(\xi, r))&\leq&2\left(\frac{1}{(\kappa N^2)^{k}(s_1\ldots s_{k})^2s_{k+1}}\right)^{\rho'}\cdot \left(\left(\kappa N^2\right)^{k+1}(s_1\cdots s_{k})^2s_{k+1}r\right)^{\rho'}\\
&=&2\kappa N^2 r^{\rho'}.
\end{eqnarray*}
Thus, in each case, on applying Frostman's Lemma and letting $\rho'$ tend to $\rho$, we obtain that
\[
\dim_H(F_{s, N, \kappa})\geq \rho:=\frac{1}{2(1+\omega)}.
\]
Combining this with the previously obtained upper bound yields $\dim_H(F_{s, N, \kappa})=1/2(1+\omega)$, for all $N>3$ and all $\kappa>0$. Thus, as the Hausdorff dimension is countably stable (see \cite{Fal}), we have that for all $\kappa>0$
\[
\dim_H(F_{s, \kappa})=\frac{1}{2(1+\omega)}.
\]

\end{proof}

We are now in a position to prove Theorem 2.

\begin{proof}[Proof of Theorem 2.]
Fix $\kappa>0$.  The first step of the proof is to show that the condition $\limsup\limits_{n\to\infty}\frac{d_n(\xi)}{t_n(\xi)}=\theta$ is equivalent to the condition that \[\limsup_{n\to\infty}\frac{d_n(\xi)}{2(d_1(\xi)+\cdots+d_{n-1}(\xi))}=\frac{\theta}{1-\theta}.\]
In order to do this, we begin by claiming that
\begin{eqnarray}\label{6.10.2}
\limsup_{n\to\infty}\frac{d_n(\xi)}{t_n(\xi)}=\theta\ \Leftrightarrow\ \limsup_{n\to\infty}\frac{d_n(\xi)}{d(0, z_{g_n})}=\frac{\theta}{1-\theta}.
\end{eqnarray}
Indeed, if $\theta>0$, we have that
\begin{eqnarray}\label{forstrictjar}
\theta=\limsup_{n\to\infty}\frac{d_n(\xi)}{t_n(\xi)}=\limsup_{n\to\infty}\frac{d_n(\xi)}{d(0, z_{g_n})+d_n(\xi)}=\frac{1}{1+\liminf\limits_{n\to\infty}\frac{d(0, z_{g_n})}{d_n(\xi)}}.
\end{eqnarray}
Therefore,
\[
\limsup_{n\to\infty}\frac{d_n(\xi)}{d(0, z_{g_n})}=\frac{1}{\frac1\theta -1}=\frac{\theta}{1-\theta}.
\]
On the other hand, if $\theta=0$, we have from (\ref{forstrictjar}) that
\[
\liminf_{n\to\infty}\frac{d(0, z_{g_n})}{d_n(\xi)}=\infty\ \Rightarrow\ \limsup_{n\to\infty}\frac{d_n(\xi)}{d(0, z_{g_n})}=0.
\]
Thus, since these arguments work equally well backwards, the claim in (\ref{6.10.2}) is proved.
Next, notice that \begin{eqnarray}\label{6.10.0}\limsup\limits_{n\to\infty}\frac{d_n(\xi)}{d(0, z_{g_n})}=\frac{\theta}{1-\theta} \Leftrightarrow\limsup\limits_{n\to\infty}\frac{d_n(\xi)}{2(d_1(\xi)+\cdots+d_{n-1}(\xi))}=\frac{\theta}{1-\theta}.\end{eqnarray} The reason for this is that we have \[2(d_1(\xi)+\cdots+d_{n-1}(\xi))\leq d(0, z_{g_n})\leq n\kappa+2(d_1(\xi)+\cdots+d_{n-1}(\xi)),\] so
\begin{eqnarray}\label{6.10.1}
\frac{d_n(\xi)}{n\kappa+2(d_1(\xi)+\cdots+d_{n-1}(\xi))}\leq \frac{d_n(\xi)}{d(0, z_{g_n})}\leq\frac{d_n(\xi)}{2(d_1(\xi)+\cdots+d_{n-1}(\xi))}.
\end{eqnarray}
Then, from the second inequality in (\ref{6.10.1}), it is immediate that
\[
\frac{\theta}{1-\theta}\leq \limsup\limits_{n\to\infty}\frac{d_n(\xi)}{2(d_1(\xi)+\cdots+d_{n-1}(\xi))}.
\]
Rewriting the first inequality from (\ref{6.10.1}), we obtain that
\[
\frac{d_n(\xi)}{2(d_1(\xi)+\cdots+d_{n-1}(\xi))}\cdot \frac{1}{1+\frac{n\kappa}{2(d_1(\xi)+\cdots+d_{n-1}(\xi))}}\leq \frac{d_n(\xi)}{d(0, z_{g_n})}.
\]
Consequently, recalling the fact that $\lim_{n\to\infty}d_n(\xi)=\infty$, we also obtain the opposite inequality, namely,
\[
\limsup\limits_{n\to\infty}\frac{d_n(\xi)}{2(d_1(\xi)+\cdots+d_{n-1}(\xi))}\leq \frac{\theta}{1-\theta}.
\]
Combining (\ref{6.10.2}) and (\ref{6.10.0}) establishes the first step of the proof.

\vspace{2mm}

\noindent We now aim to use this equivalent definition to find an upper bound for the sought-after Hausdorff dimension. So, suppose that $\xi\in\mathcal{J}_{\theta, \kappa}^*(G)$, that is, suppose that $\xi\in\mathcal{B}_\kappa$ is such that $\lim_{n\to\infty}d_n(\xi)=\infty$ and
\[
\limsup_{n\to\infty}\frac{d_n(\xi)}{2(d_1(\xi)+\cdots +d_{n-1}(\xi))}=\frac{\theta}{1-\theta}.
\]
Then, for each $n\in\N$ pick an integer $\hat{s}_n$ such that
\[
\log(\hat{s}_n)\leq d_n(\xi)<\log(\hat{s}_n +1).
\]
Since $\lim_{n\to\infty}d_n(\xi)=\infty$, we can immediately infer that $\lim_{n\to\infty}\hat{s}_n=\infty$. It is clear that we can write, say,
$\log \hat{s}_n\leq d_n(\xi)<\log 3\hat{s}_n$. So, if we can show that $\limsup\limits_{n\to\infty}\frac{\log \hat{s}_n}{2\log(\hat{s}_1\ldots \hat{s}_{n-1})}=\frac{\theta}{1-\theta}$, then we have that $\xi\in F_{\hat{s}, 3,\kappa}$. But,
\begin{eqnarray*}
\frac{d_n(\xi)}{2(d_1(\xi)+\cdots +d_{n-1}(\xi))}\leq \frac{\log(3\hat{s}_n)}{2\log(\hat{s}_1\ldots \hat{s}_{n-1})}
\end{eqnarray*}
and
\[\frac{d_n(\xi)}{2(d_1(\xi)+\cdots +d_{n-1}(\xi))}\geq \frac{\log(\hat{s}_n)}{2\log(\hat{s}_1\ldots \hat{s}_{n-1})+2n\log3}.
\]

From these two inequalities, we see that this reduces to basically the same argument again. Thus we obtain that
\[
\dim_H(\mathcal{J}_{\theta, \kappa}^*(G))\leq \dim_{H}(F_{\hat{s}, 3,\kappa})=\frac{1}{2(1+\frac{\theta}{1-\theta})}=\frac12 (1-\theta).
\]

Finally, suppose now that $N>2$ and $(s_n)_{n\in\N}$ is a sequence satisfying \[
\lim_{n\to\infty}s_n=\infty,\  \limsup_{n\to\infty}\frac{\log s_n}{\log(s_1\ldots s_{n-1})}=\frac{\theta}{1-\theta} \ \text{ and }\  \log s_n\leq d_n(\xi)<\log Ns_n\ \forall n\in\N.
\]Let $\xi\in F_{s, N,\kappa}(G)$.
By similar reasoning to that above, it follows that $\xi\in \mathcal{J}_{\theta, \kappa}^*(G)$. Consequently, we have that
\[
\frac12(1-\theta)=\dim_H(F_{s,N, \kappa}(G))\leq \dim_H(\mathcal{J}_{\theta, \kappa}^*(G)).
\]
Thus, combining this with the opposite inequality achieved above shows that $\dim_H(\mathcal{J}_{\theta, \kappa}^*(G))=1/2(1-\theta)$. As $\kappa>0$ was arbitrary, the proof of Theorem 2 is finished.

\end{proof}

\section{Weak Multifractal Spectra for the Patterson Measure}\label{final}

In this last section, we provide the proof of Theorem 3.

\begin{proof}[Proof of Theorem 3.]
     The global measure formula for $\mu$ gives the existence of
     a  constant
     $c>0$ (depending only on $G$), such that for each $\xi \in L(G)$ and
every $t>0$ we have that

     \[ \delta +(\delta -k(\xi_t)) \frac{\Delta(\xi_t)}{t} - \frac{c}{t}
     \leq \frac{\log \mu (b(\xi_t))}{\log e^{-t}} \leq \delta +(\delta -k(\xi_t))
      \frac{\Delta(\xi_t)}{t} + \frac{c}{t} .\]
(Here we are interested in the case that $t=t_n(\xi)$, $\Delta(\xi_t)=d_n(\xi)$ and $k(\xi_t)=1$.) From this we immediately deduce that
     $\xi \in {\mathcal J}_\theta^{*}(G)$ if and only if $\xi \in
     {\mathcal B}(G)$ and \[
     \limsup_{n \to \infty} \frac{\log \mu(b(\xi, e^{-t_n(\xi)}))}{-t_n(\xi)}
    =\delta -(1-\delta) \theta.\]
     Consequently, if $\beta:=  \delta -(1-\delta) \theta$, the result then follows immediately by an application of Theorem 2.
     \end{proof}

     \begin{rem} Note that in \cite{SJarnik1}
   a ``weak multifractal analysis'' of the Patterson measure was given. The
analysis there was based on
 investigations of the Hausdorff dimension  of the associated
{\em $\theta$-Jarn\'{\i}k limit set}
  \[ {\mathcal J}_\theta(G) : = \left\{ \xi \in L(G) : \limsup_{t \to
\infty} \frac{\Delta(\xi_t)}{t} \geq
\theta \right\}.
\]
 In
\cite{SJarnik1} (see also \cite{SJarnik2} and \cite{HV})
 the result was obtained that
\[ \dim_H({\mathcal J}_\theta(G)) = (1-\theta)\delta, \, \mbox{
for each } \, \theta \in [0,1].\]
In \cite{SJarnik2} it was then shown how to use this result in order
to derive the following  ``weak multifractal spectrum'' of the
Patterson measure:
\[
\dim_H\left({\mathcal F}_{\beta} \right) =  \left\{
\begin{array}
  {r@{\quad\text{for}\quad}l}0 & 0 <  \beta \leq 2\delta-1 \\
\delta\cdot f_p(\beta)
& 2\delta-1 < \beta \leq \delta \\
  \delta & \beta > \delta .
\end{array} \right.,
\]
where $f_p$ is given, as before, by $f_p(\beta)=(\beta-(2\delta-1))/(1-\delta)$ and where $\mathcal{F}_\beta(G)$ is defined by

\[
\mathcal{F}_\beta(G):=\left\{\xi\in L(G): \liminf_{n\to\infty}\frac{\log \mu(b(\xi, e^{-t_n(\xi)}))}{-t_n(\xi)}\leq \beta \right\}.
\]

The outcome here should be compared with the result in Theorem 3. The two spectra are illustrated in Figure 5.1, below.

\begin{figure}[htbp]\label{paddyspectra}
\begin{center}
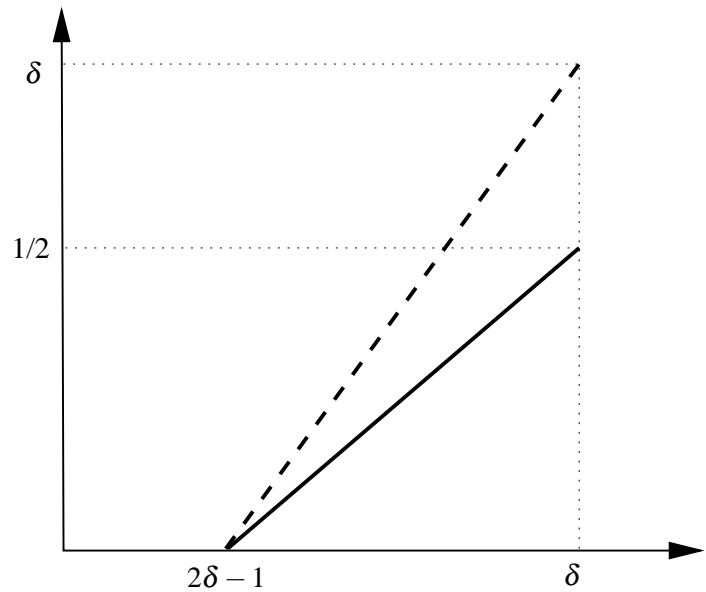\caption{Weak multifractal
spectra for the Patterson measure. The upper (dashed) line is the graph of the spectrum $\dim_H(\mathcal{F}_\beta(G))$ and the lower (solid) line is the graph of the spectrum $\dim_H(\mathcal{F}^*_\beta(G))$.}
\end{center}
\end{figure}

     \end{rem}

\thanks{The author would like to acknowledge the efforts of Prof. Urba\'{n}ski and the others on the organising committee for providing such a stimulating research environment during the Dynamical Systems II conference. The author would also like to thank her supervisor, Prof. Bernd Stratmann, for numerous helpful discussions.}

\end{document}